\documentclass[preprint]{article}%
\usepackage{graphicx}
\usepackage{amsmath,amsxtra,amssymb,latexsym, amscd,amsthm}
\usepackage[mathscr]{eucal}
\voffset =0in
\hoffset =0in
\parskip5pt
\setlength{\oddsidemargin}{0.378in}
\setlength{\evensidemargin}{0.378in}
\setlength{\textwidth}{15.5cm}
\textheight 22truecm
\voffset=-0.5in
\newtheorem{thm}{Theorem}[section]

\newtheorem{lem}{Lemma}[section]
\newtheorem{prop}{Proposition}[section]
\theoremstyle{definition}

\theoremstyle{remark}
\newtheorem{rem}{Remark}[section]
\numberwithin{equation}{section}
\def\ind{{\rm 1\hspace{-0.90ex}1}}

\begin{document}

\title{Non-uniform Berry-Esseen bounds via Malliavin-Stein method}
\author{Nguyen Tien Dung\thanks{Department of Mathematics, VNU University of Science, Vietnam National University, Hanoi, 334 Nguyen
Trai, Thanh Xuan, Hanoi, 084 Vietnam. Email: dung@hus.edu.vn}\and Le Vi$^\ast$\and Pham Thi Phuong Thuy\thanks{The faculty of Basic Sciences, Vietnam Air Defence and Air Force Academy, Son Tay, Ha Noi, 084 Vietnam.}}

\date{\today}          

\maketitle
\begin{abstract} In this paper, we establish non-uniform Berry-Esseen bounds by means of the Malliavin-Stein method. Applications to the multiple Wiener-It\^o integrals and the exponential functionals of Brownian motion are given to illustrate the theory.
\end{abstract}
\noindent\emph{Keywords:} Malliavin-Stein method, non-uniform Berry-Esseen bound.\\
{\em 2010 Mathematics Subject Classification:} 60F05, 60G15, 60H07.
\section{Introduction}
The Malliavin-Stein method was introduced by Nourdin and Peccati in 2009. Their seminal paper \cite{NourdinPe2009} combines Stein's method and Malliavin calculus to obtain quantitative central limit theorems for the functionals of a Gaussian process. Since then, many important achievements have been obtained by various authors. For a long list of papers using the Malliavin-Stein method, the reader can consult the constantly updated website
$$\text{https://sites.google.com/site/malliavinstein/home.}$$
One of main results obtained in \cite{NourdinPe2009} is the following uniform Berry-Esseen bound: Let $Z\sim N(0,1)$ and let $F\in \mathbb{D}^{1,2}$ with zero mean. If the law of $F$ is absolutely continuous with respect to the Lebesgue measure, then we have
\begin{equation}\label{whjo}
\sup\limits_{z\in \mathbb{R}}|P(F\leq z)-P(Z\leq z)|\leq \sqrt{E|1-\langle DF,-DL^{-1}F\rangle_{\mathfrak{H}}|^2}.
\end{equation}
The space $\mathbb{D}^{1,2},$ the operators $D$ and $L^{-1}$ will be defined in Section \ref{uufk} below. We recall that the uniform Berry-Esseen bounds are very useful in statistical applications. However,  such bounds only work well for values of $z$ near the center of the distribution. For $z$ sufficiently large, the difference $P(F\leq z)-P(Z\leq z)$ becomes so close to zero and the uniform bounds are too crude to be of any use. 

In this paper, our aim is to obtain the non-uniform Berry-Esseen bounds standing for (\ref{whjo}). More specifically, under the same assumption as in \cite{NourdinPe2009}, we obtain in Theorem \ref{kfoi} below the following bound
\begin{equation}\label{h4jo}
|P(F\leq z)-P(Z\leq z)|\leq \sqrt{E|1-\langle DF,-DL^{-1}F\rangle_{\mathfrak{H}}|^2}(\sqrt{P(|F|>|z|/2)}+2e^{-z^2/4})\,\,\,\forall\,z\in \mathbb{R}.
\end{equation}
In the classical central limit theorem for independent random variables, the rate of convergence with respect to $z$ is of order $O(1+|z|^3).$ Here, by using Markov's inequality, our bound (\ref{h4jo}) also gives the same this rate whenever $E|F|^6$ is finite. Furthermore, we can employ the theory of concentration inequalities to evaluate $P(|F|>|z|/2)$ and, in many situations, we are able to obtain a much faster rate. For example, when applied to $F=I_q(f)$ the multiple Wiener-It\^o integral of order $q\geq 2,$ the bound (\ref{h4jo}) gives us an exponential rate that reads
$$|P(F\leq z)-P(Z\leq z)|\leq \sqrt{\frac{q-1}{3q}(E|F|^4-3)}(c_qe^{-\frac{z^{2/q}}{2^{2+2/q}}}+2e^{-\frac{z^2}{4}})\,\,\,\forall\,z\in \mathbb{R},$$
where $c_q$ is a positive constant.

The rest of this article is organized as follows. In Section \ref{uufk}, we briefly recall some of the relevant elements of the Malliavin calculus. Our main results are then formulated and proved in Section \ref{uloq}. In Section \ref{9jko}, we apply our results to derive non-uniform Berry-Esseen bounds for the multiple Wiener-It\^o integrals and the exponential functionals of Brownian motion.
\section{Malliavin Calculus}\label{uufk}
Let us recall some elements of Malliavin calculus that we need in order to perform our proofs (for more details see \cite{nualartm2}). Suppose that $\mathfrak{H}$ is a real separable Hilbert space with scalar product denoted by $\langle.,.\rangle_\mathfrak{H}.$ We denote by $W = \{W(h) : h \in \mathfrak{H}\}$ an isonormal Gaussian process defined in a complete probability space $(\Omega,\mathcal{F},P),$ $\mathcal{F}$ is the $\sigma$-field generated by $W.$ Let $\mathcal{S}$ be the set of all smooth cylindrical random variables of the form
\begin{equation}\label{ro}
F=f(W(h_1),...,W(h_n)),
\end{equation}
where $n\in \mathbb{N}, f\in C_b^\infty(\mathbb{R}^n)$ the set of bounded and infinitely differentiable functions with bounded partial derivatives, $h_1,...,h_n\in \mathfrak{H}.$ If $F$ has the form (\ref{ro}), we define its Malliavin derivative with respect to $W$ as the element of $L^2(\Omega,\mathfrak{H})$ given by
$$DF=\sum\limits_{k=1}^n \frac{\partial f}{\partial x_k}(W(h_1),...,W(h_n)) h_k.$$
More generally, we can define the $k$th order derivative $D^kF\in L^2(\Omega, \mathfrak{H}^{\otimes k})$ by iterating the derivative operator $k$ times. For any integer $k\geq 1$ and any $p\geq 1,$ we denote by $\mathbb{D}^{k,p}$ the closure of $\mathcal{S}$ with respect to the norm
$$\|F\|^p_{k,p}:=E|F|^p+\sum\limits_{i=1}^kE\|D^i F\|^p_{\mathfrak{H}^{\otimes i}}.$$
An important operator in the Malliavin calculus theory is the divergence operator $\delta,$ it is the adjoint of the derivative operator $D$ characterized by
\begin{equation}\label{s44}
E\langle DF,u\rangle_{\mathfrak{H}}=E[F\delta(u)]
\end{equation}
for any $F\in \mathcal{S}$ and $u\in L^2(\Omega,\mathfrak{H}).$ The domain of $\delta$ is the set of all processes $u\in L^2(\Omega,\mathfrak{H})$ such that
$$E|\langle DF,u\rangle_{\mathfrak{H}}|\leq C(u)\|F\|_{L^2(\Omega)},$$
where $C(u)$ is some positive constant depending only on $u.$ Let $F\in \mathbb{D}^{1,2}$ and $u\in Dom\,\delta$ such that  $Fu\in L^2(\Omega,\mathfrak{H}).$ Then $Fu\in Dom\,\delta$ and we have the following relation
$$
\delta(Fu)=F\delta(u)-\left\langle DF,u\right\rangle _{\mathfrak{H}},
$$
provided the right-hand side is square integrable.

For any integer $q\geq 0,$ we denote by $\mathcal{H}_q$ the $q$th Wiener chaos of $W.$ We recall that $\mathcal{H}_0=\mathbb{R}$ and, for any $q\geq 1,$ $\mathcal{H}_q$ is the closed linear subspace of $L^2(\Omega)$ generated by the family of random variables $I_q(f)$ where $I_q$ indicates a multiple Wiener-It\^o integral of order $q$ and $f\in \mathfrak{H}^{\odot q}$ (the $q$th symmetric tensor power of $\mathfrak{H}$). It is known that any random variable $F$ in $L^2(\Omega)$ can be expanded into an orthogonal sum of its Wiener chaos:
$$F=E[F]+\sum\limits_{q=1}^\infty I_q(f_q),$$
where the series converges in $L^2(\Omega)$ and $f_q\in \mathfrak{H}^{\odot q}.$ From this chaos expansion one may define the Ornstein-Uhlenbeck operator $L$ by $LF=\sum\limits_{q=1}^\infty-q I_q(f_q)$ when $F\in \mathbb{D}^{2,2}$ and its pseudo-inverse by $L^{-1}(F-E[F])=\sum\limits_{q=1}^\infty-\frac{1}{q} I_q(f_q).$ Hereafter, we write $L^{-1}F$ instead of $L^{-1}(F-E[F]).$ Note that, for any $F\in L^2(\Omega),$ we have $L^{-1}F\in Dom\,L$ and $LL^{-1}F=L^{-1}LF=F-E[F].$ Moreover, the operators $D,\delta$ and $L$ satisfy the following relationship: $F\in Dom\,L$ if and only if $F\in \mathbb{D}^{2,2}$ and, in this case,
\begin{equation}\label{hj4}
\delta D F=-LF.
\end{equation}

\section{Non-uniform Berry-Esseen bounds}\label{uloq}

Let $\Phi$ denote the cumulative distribution function of standard normal random variable. We recall that, for each $z\in \mathbb{R},$ the Stein equation
\begin{equation*}
f'(x)-xf(x)=\ind_{\{x\leq z\}}-\Phi(z),\,\,x\in \mathbb{R}
\end{equation*}
admits a unique solution $f_z(x)$ given by
\begin{equation}\label{uydk}
f_z(x)=\left\{
         \begin{array}{ll}
           \sqrt{2\pi}e^{x^2/2}\Phi(x)(1-\Phi(z))& \hbox{if}\,\,\,x\leq z, \\
           \sqrt{2\pi}e^{x^2/2}(1-\Phi(x))\Phi(z) & \hbox{if}\,\,\,x> z.
         \end{array}
       \right.
\end{equation}
The key point in our proof is to establish new estimates for $f_z$ and $f'_z$ as in the next lemma.
\begin{lem}\label{erf} Let $z>0$ and let $f_z$ be given by (\ref{uydk}). Then we have
\begin{equation}\label{m9io1}
0<f_z(x)\leq \frac{\sqrt{2\pi}}{4},\,\,\,|f'_z(x)|\leq 1\,\,\forall\,x\in \mathbb{R},
\end{equation}
\begin{equation}\label{m9q}
0<f_z(x)\leq \frac{\sqrt{2\pi}}{2}e^{-z^2/4}\,\,\forall\,|x|\leq z/2
\end{equation}
and
\begin{equation}\label{m9io}
|f'_z(x)|\leq 2e^{-z^2/4}\,\,\forall\,|x|\leq z/2.
\end{equation}
\end{lem}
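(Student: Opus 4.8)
The plan is to read everything off the explicit formula (\ref{uydk}) together with the Stein identity $f_z'(x)=xf_z(x)+\ind_{\{x\le z\}}-\Phi(z)$, which lets me convert estimates on $f_z'$ into estimates on $f_z$. Positivity of $f_z$ in all three displays is immediate, since every factor appearing in (\ref{uydk}) (namely $e^{x^2/2}$, $\Phi$ and $1-\Phi$) is strictly positive for finite $z$. The two classical bounds in (\ref{m9io1}) I would get by domination: for $x\le z$ monotonicity of $\Phi$ gives $1-\Phi(z)\le 1-\Phi(x)$, while for $x>z$ it gives $\Phi(z)\le\Phi(x)$, so in \emph{both} regimes $f_z(x)\le g(x):=\sqrt{2\pi}e^{x^2/2}\Phi(x)(1-\Phi(x))$. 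Since $g$ is even and (by a direct sign analysis of $g'$) decreasing on $[0,\infty)$, it attains its maximum $g(0)=\sqrt{2\pi}/4$, which is the first inequality; the companion bound $|f_z'|\le 1$ is the standard Stein-solution estimate, which I would record from its classical derivation.

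For the new estimate (\ref{m9q}) the essential observation is that $|x|\le z/2$ forces $x\le z$, so only the first branch of (\ref{uydk}) is active and I may bound its three factors separately: $e^{x^2/2}\le e^{z^2/8}$, $\Phi(x)\le 1$, and a Gaussian tail bound on $1-\Phi(z)$. The tail estimate I want is $1-\Phi(z)\le\tfrac12 e^{-z^2/2}$ for $z\ge 0$. Multiplying the three factors then yields $f_z(x)\le\tfrac{\sqrt{2\pi}}{2}e^{z^2/8}e^{-z^2/2}=\tfrac{\sqrt{2\pi}}{2}e^{-3z^2/8}$, which is stronger than the claimed $\tfrac{\sqrt{2\pi}}{2}e^{-z^2/4}$ because $3z^2/8\ge z^2/4$.

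For (\ref{m9io}), since $x\le z$ the Stein identity reads $f_z'(x)=xf_z(x)+1-\Phi(z)$, so $|f_z'(x)|\le|x|f_z(x)+(1-\Phi(z))$. I would control the first term with the \emph{sharper} bound $f_z(x)\le\tfrac{\sqrt{2\pi}}{2}e^{-3z^2/8}$ from the previous step together with $|x|\le z/2$: this gives $|x|f_z(x)\le\tfrac{\sqrt{2\pi}}{4}\,z e^{-3z^2/8}=\tfrac{\sqrt{2\pi}}{4}\bigl(z e^{-z^2/8}\bigr)e^{-z^2/4}$, and since $z e^{-z^2/8}\le 2e^{-1/2}$ (the maximum occurring at $z=2$) this is at most $\tfrac{\sqrt{2\pi}}{2}e^{-1/2}e^{-z^2/4}<e^{-z^2/4}$. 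Combining with $1-\Phi(z)\le\tfrac12 e^{-z^2/2}\le\tfrac12 e^{-z^2/4}$ yields $|f_z'(x)|\le 2e^{-z^2/4}$, with the constant $2$ comfortably not tight.

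The chief difficulty is not any single estimate but the interplay exposed in (\ref{m9io}): feeding the advertised bound (\ref{m9q}) directly into $|x|f_z(x)$ leaves an unwanted linear factor $z$, and the only way to absorb it is to retain the sharper exponent $3z^2/8$ and exploit the boundedness of $z\mapsto z e^{-z^2/8}$. So one must prove the $e^{-3z^2/8}$ estimate en route, even though (\ref{m9q}) only states $e^{-z^2/4}$. The other delicate ingredient is the tail inequality $1-\Phi(z)\le\tfrac12 e^{-z^2/2}$: the difference $\tfrac12 e^{-z^2/2}-(1-\Phi(z))$ vanishes at $0$ and its derivative changes sign at $z=\sqrt{2/\pi}$, so it is not monotone; I would instead argue that it increases from $0$ and then decreases while staying positive, since its limit at $+\infty$ is $0$ and $1-\Phi(z)$ is of smaller order than $e^{-z^2/2}$ there. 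I expect this monotonicity-plus-asymptotics reasoning (and the analogous argument giving $\sup g=g(0)$ for (\ref{m9io1})) to be the fiddliest part of the write-up.
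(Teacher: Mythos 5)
Your proposal is correct and follows essentially the same route as the paper: (\ref{m9q}) is obtained by the identical three-factor bound $e^{x^2/2}\le e^{z^2/8}$, $\Phi(x)\le 1$, $1-\Phi(z)\le\frac{1}{2}e^{-z^2/2}$, and for (\ref{m9io}) your Stein-identity decomposition $|x|f_z(x)+(1-\Phi(z))$ is algebraically the same as the paper's direct derivative $(1-\Phi(z))\bigl(1+\sqrt{2\pi}xe^{x^2/2}\Phi(x)\bigr)$, estimated with the same two ingredients, namely the Gaussian tail bound and $ze^{-z^2/8}\le 2e^{-1/2}$, yielding the same constant ($\approx 1.26<2$). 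The only cosmetic differences are that the paper proves $1-\Phi(z)\le\frac{1}{2}e^{-z^2/2}$ by the one-line substitution $(u+z)^2\ge u^2+z^2$ rather than your monotonicity-plus-asymptotics argument, and it simply cites Lemma 2.3 of Chen--Goldstein--Shao for (\ref{m9io1}) instead of sketching the $\sup g = g(0)$ analysis.
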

\begin{proof}The estimates in (\ref{m9io1}) are well known, see e.g Lemma 2.3 in \cite{Chen2011}. Let us prove (\ref{m9io}). When $x\leq z,$ we have $f_z(x)=\sqrt{2\pi}e^{x^2/2}\Phi(x)(1-\Phi(z))$ and hence,
$$f'_z(x)=(1-\Phi(z))\left(1+\sqrt{2\pi}xe^{x^2/2}\Phi(x)\right).$$
Fixed $z>0,$ we deduce
$$|f'_z(x)|\leq (1-\Phi(z))\left(1+\frac{\sqrt{2\pi}}{2}ze^{z^2/8}\right),\,\,|x|\leq z/2.$$
We now observe that
\begin{align*}
1-\Phi(z)=\int_0^\infty \frac{1}{\sqrt{2\pi}}e^{-(u+z)^2/2}du\leq e^{-z^2/2}\int_0^\infty \frac{1}{\sqrt{2\pi}}e^{-u^2/2}du=\frac{1}{2}e^{-z^2/2}.
\end{align*}
Consequently, we obtain
\begin{align*}
|f'_z(x)|&\leq \frac{1}{2}e^{-z^2/2}\left(1+\sqrt{\pi/2}ze^{z^2/8}\right)\\
&\leq \frac{1}{2}e^{-z^2/4}\left(1+\sqrt{\pi/2}ze^{-z^2/8}\right),\,\,|x|\leq z/2.
\end{align*}
Furthermore, it is easy to verify that $ze^{-z^2/8}\leq 2e^{-1/2}\,\forall\,z>0.$ So we conclude that
$$|f'_z(x)|\leq \frac{1}{2}e^{-z^2/4}\left(1+2\sqrt{\pi/2}e^{-1/2}\right)<2e^{-z^2/4},\,\,|x|\leq z/2.$$
This completes the proof of (\ref{m9io}). The verification of (\ref{m9q}) is trivial. Indeed, we have
$$f_z(x)=\sqrt{2\pi}e^{x^2/2}\Phi(x)(1-\Phi(z))\leq \frac{\sqrt{2\pi}}{2}e^{z^2/8}e^{-z^2/2}\leq \frac{\sqrt{2\pi}}{2}e^{-z^2/4},\,\,|x|\leq z/2.$$
The proof of the lemma is complete.
\end{proof}
We now are in a position to state and prove the main results of the present paper.
\begin{thm}\label{kfoi} Let $Z\sim N(0,1)$ and let $F\in \mathbb{D}^{1,2}.$ If, in addition, the law of $F$ is absolutely continuous with respect to the Lebesgue measure, then we have
\begin{equation}\label{hjo}
|P(F\leq z)-P(Z\leq z)|\leq \big(|E[F]|+\sqrt{E|1-\langle DF,-DL^{-1}F\rangle_{\mathfrak{H}}|^2}\big)\big(\sqrt{P(|F|>|z|/2)}+2e^{-z^2/4}\big)\,\,\,\forall\,z\in \mathbb{R}.
\end{equation}
\end{thm}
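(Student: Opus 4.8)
The plan is to combine the Stein equation with Malliavin integration by parts to obtain an \emph{exact} expression for $P(F\le z)-\Phi(z)$, and then to feed into it the refined estimates of Lemma~\ref{erf}. Write $R:=1-\langle DF,-DL^{-1}F\rangle_{\mathfrak H}$. Evaluating the Stein equation at $x=F$ and taking expectations gives $P(F\le z)-\Phi(z)=E[f_z'(F)-Ff_z(F)]$. Since $f_z$ is Lipschitz (its derivative is bounded by $1$ by (\ref{m9io1})) and the law of $F$ is absolutely continuous, one has $f_z(F)\in\mathbb D^{1,2}$ with $D(f_z(F))=f_z'(F)DF$; the jump of $f_z'$ at $z$ is harmless precisely because $P(F=z)=0$. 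Using $F-E[F]=LL^{-1}F=-\delta DL^{-1}F$ together with the duality (\ref{s44}), I would derive $E[Ff_z(F)]=E[F]\,E[f_z(F)]+E[f_z'(F)\langle DF,-DL^{-1}F\rangle_{\mathfrak H}]$, which rearranges to
\begin{equation*}
P(F\le z)-\Phi(z)=E[f_z'(F)R]-E[F]\,E[f_z(F)].
\end{equation*}

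Assume first $z>0$. For the term $E[f_z'(F)R]$ I would split the expectation over $\{|F|\le z/2\}$ and $\{|F|>z/2\}$. On the first event (\ref{m9io}) gives $|f_z'(F)|\le 2e^{-z^2/4}$, so that part is at most $2e^{-z^2/4}\,E|R|\le 2e^{-z^2/4}\sqrt{ER^2}$ by Jensen; on the second event $|f_z'|\le 1$ and Cauchy--Schwarz bound it by $\sqrt{ER^2}\,\sqrt{P(|F|>z/2)}$. Hence $|E[f_z'(F)R]|\le\sqrt{ER^2}\,\bigl(\sqrt{P(|F|>z/2)}+2e^{-z^2/4}\bigr)$. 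For the term $E[f_z(F)]$, which is nonnegative since $f_z>0$, the same splitting using (\ref{m9q}) on $\{|F|\le z/2\}$ and the global bound $f_z\le\sqrt{2\pi}/4$ on $\{|F|>z/2\}$, combined with $P(|F|>z/2)\le\sqrt{P(|F|>z/2)}$ and the numerical facts $\sqrt{2\pi}/2<2$ and $\sqrt{2\pi}/4<1$, yields $E[f_z(F)]\le\sqrt{P(|F|>z/2)}+2e^{-z^2/4}$. Adding the two contributions establishes (\ref{hjo}) for $z>0$.

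It remains to extend the bound to all $z$. The case $z<0$ follows by symmetry: apply the $z>0$ result to $G:=-F$ at the point $-z>0$, and note that $L^{-1}G=-L^{-1}F$ and $DG=-DF$, so $\langle DG,-DL^{-1}G\rangle_{\mathfrak H}=\langle DF,-DL^{-1}F\rangle_{\mathfrak H}$, while $|E[G]|=|E[F]|$ and $P(|G|>|z|/2)=P(|F|>|z|/2)$. Since $F$ has no atoms one checks $P(F\le z)-\Phi(z)=-\bigl(P(G\le -z)-\Phi(-z)\bigr)$, so the bound transfers verbatim. The boundary case $z=0$ then follows by letting $z\downarrow 0$, again using that $F$ has no atoms.

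The main obstacle I anticipate is not any single estimate but the bookkeeping needed to make the two competing regimes fit the \emph{same} envelope $\sqrt{P(|F|>|z|/2)}+2e^{-z^2/4}$: the ``bulk'' $\{|F|\le z/2\}$ must be absorbed by the exponential smallness of $f_z$ and $f_z'$ from Lemma~\ref{erf}, whereas the ``tail'' $\{|F|>z/2\}$ must be absorbed by Cauchy--Schwarz against $\sqrt{P(|F|>z/2)}$; verifying that the constants $\sqrt{2\pi}/2$ and $\sqrt{2\pi}/4$ are dominated by the stated $2$ and $1$, and that the mean term $E[F]\,E[f_z(F)]$ also respects this envelope, is where the argument must be assembled with care. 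A secondary technical point is justifying the integration-by-parts identity despite the discontinuity of $f_z'$ at $z$, which is exactly where the absolute continuity of the law of $F$ is used.
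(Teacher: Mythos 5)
Your proposal is correct and follows essentially the same route as the paper: the identical Stein--Malliavin identity $P(F\le z)-\Phi(z)=E[f_z'(F)R]-E[F]\,E[f_z(F)]$, the same splitting over $\{|F|\le z/2\}$ versus $\{|F|>z/2\}$ with Lemma~\ref{erf} and Cauchy--Schwarz, and the same reflection $F\mapsto -F$ for negative $z$. The only (immaterial) difference is that the paper absorbs $z=0$ directly into the nonnegative case rather than treating it by a limiting argument.
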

\begin{proof}We first recall  the arguments in the proof of Theorem 3.1 in \cite{NourdinPe2009}. Note that $F=E[F]-\delta DL^{-1}F.$ Then, for any $z\in \mathbb{R},$ by using the Stein's equation and the relationship (\ref{s44}), we have
\begin{align*}
P(F\leq z)-P(Z\leq z)&=E[f'_z(F)]-E[f_z(F)F]\\
&=E[f'_z(F)]-E[f_z(F)(F-E[F])]-E[f_z(F)]E[F]\\
&=E[f'_z(F)]+E[f_z(F)\delta DL^{-1}F]-E[f_z(F)]E[F]\\
&=E[f'_z(F)(1-\langle DF,-DL^{-1}F\rangle_{\mathfrak{H}})]-E[f_z(F)]E[F].
\end{align*}
\noindent{\bf Case 1: $z\geq 0.$}  We rewrite the above expression as follows
\begin{align*}
P&(F\leq z)-P(Z\leq z)\\
&=E[f'_z(F)(1-\langle DF,-DL^{-1}F\rangle_{\mathfrak{H}})\ind_{\{|F|\leq z/2\}}]+E[f'_z(F)(1-\langle DF,-DL^{-1}F\rangle_{\mathfrak{H}})\ind_{\{|F|> z/2\}}]\\
&-E[f_z(F)\ind_{\{|F|\leq z/2\}}]E[F]-E[f_z(F)\ind_{\{|F|> z/2\}}]E[F].
\end{align*}
Then, by Lemma \ref{erf}, we get
\begin{align*}
|P&(F\leq z)-P(Z\leq z)|\\
&\leq E|f'_z(F)(1-\langle DF,-DL^{-1}F\rangle_{\mathfrak{H}})\ind_{\{|F|\leq z/2\}}|+E|f'_z(F)(1-\langle DF,-DL^{-1}F\rangle_{\mathfrak{H}})\ind_{\{|F|> z/2\}}|\\
&+(E[f_z(F)\ind_{\{|F|\leq z/2\}}]+E[f_z(F)\ind_{\{|F|> z/2\}}])|E[F]|\\
&\leq 2e^{-z^2/4}E|1-\langle DF,-DL^{-1}F\rangle_{\mathfrak{H}}|+E\left[|1-\langle DF,-DL^{-1}F\rangle_{\mathfrak{H}}|\ind_{\{|F|> z/2\}}\right]\\
&+\bigg(\frac{\sqrt{2\pi}}{2}e^{-z^2/4}+\frac{\sqrt{2\pi}}{4}P(|F|>z/2)\bigg)|E[F]|.
\end{align*}
So, by applying the Cauchy-Schwarz inequality, we obtain
\begin{align*}
|P(F\leq z)-P(Z\leq z)|\leq \big(|E[F]|+\sqrt{E|1-\langle DF,-DL^{-1}F\rangle_{\mathfrak{H}}|^2}\big)\big(\sqrt{P(|F|>z/2)}+2e^{-z^2/4}\big).
\end{align*}
This finishes the proof of (\ref{hjo}) for $z\geq 0.$

\noindent{\bf Case 2: $z< 0.$} This case follows directly from {\bf Case 1}. Indeed, we have
\begin{align*}
|P(F\leq z)-P(Z\leq z)|&=|P(-F\geq -z)-P(-Z\geq -z)|\\
&=|P(-F\geq -z)-P(Z\geq -z)|\\
&=|P(-F\leq -z)-P(Z\leq -z)|\\
&\leq \big(|E[-F]|+\sqrt{E|1-\langle D(-F),-DL^{-1}(-F)\rangle_{\mathfrak{H}}|^2}\big)\big(\sqrt{P(|-F|>-z/2)}+2e^{-z^2/4}\big)\\
&=\big(|E[F]|+\sqrt{E|1-\langle DF,-DL^{-1}F\rangle_{\mathfrak{H}}|^2}\big)\big(\sqrt{P(|F|>-z/2)}+2e^{-z^2/4}\big).
\end{align*}
The proof of the theorem is complete.
\end{proof}
\begin{rem}
When $F$ can be represented as a Skorokhod integral: $F=E[F]+\delta(u)$ for some stochastic process $u,$ we use the relationship (\ref{s44}) to get
\begin{align*}
P(F\leq z)-P(Z\leq z)&=E[f'_z(F)]-E[f_z(F)(F-E[F])]-E[f_z(F)]E[F]\\
&=E[f'_z(F)]-E[f_z(F)\delta(u)]-E[f_z(F)]E[F]\\
&=E[f'_z(F)(1-\langle DF,u\rangle_{\mathfrak{H}})]-E[f_z(F)]E[F].
\end{align*}
Hence, with the exact proof of Theorem \ref{kfoi}, we also have the following non-uniform Berry-Esseen bound
\begin{equation}\label{h5jo}
|P(F\leq z)-P(Z\leq z)|\leq \big(|E[F]|+\sqrt{E|1-\langle DF,u\rangle_{\mathfrak{H}}|^2}\big)(\sqrt{P(|F|>|z|/2)}+2e^{-z^2/4})\,\,\,\forall\,z\in \mathbb{R}.
\end{equation}

\end{rem}
\section{Some applications}\label{9jko}
In this section, we provide some applications to illustrate the results obtained in the previous section.
\subsection{Multiple Wiener-It\^o integrals}
We recall that among the applications of the bound (\ref{whjo}) is the fourth moment theorem for the multiple Wiener-It\^o integrals. Here we have the following non-uniform Berry-Esseen bound.
\begin{thm} Let $q\geq 2$ be an integer, and let  $F= I_q(f)$ have variance one. Then, it holds that
\begin{equation}\label{hjo6}
|P(F\leq z)-P(Z\leq z)|\leq \sqrt{\frac{q-1}{3q}(E|F|^4-3)}(c_qe^{-\frac{z^{2/q}}{2^{2+2/q}}}+2e^{-\frac{z^2}{4}})\,\,\,\forall\,z\in \mathbb{R},
\end{equation}
where $c_q$ is a positive constant depending only on $q.$
\end{thm}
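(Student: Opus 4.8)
The plan is to apply Theorem~\ref{kfoi} directly to $F=I_q(f)$ and then control each of the two quantities appearing in the bound~(\ref{hjo}): the variance-type factor $\sqrt{E|1-\langle DF,-DL^{-1}F\rangle_{\mathfrak{H}}|^2}$ and the tail probability $P(|F|>|z|/2)$. Since $F=I_q(f)$ lives in the $q$th Wiener chaos, we have $E[F]=0$ and $L^{-1}F=-\frac{1}{q}F$, so that $-DL^{-1}F=\frac{1}{q}DF$ and therefore $\langle DF,-DL^{-1}F\rangle_{\mathfrak{H}}=\frac{1}{q}\|DF\|^2_{\mathfrak{H}}$. The leading constant $|E[F]|$ vanishes, which already matches the form of~(\ref{hjo6}).

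For the variance factor, the first step is to recall the standard fourth-moment identity for chaoses. Using the product formula for multiple integrals together with $E[F^2]=1$, one obtains the well-known relation
\begin{equation*}
E\Big|1-\tfrac{1}{q}\|DF\|^2_{\mathfrak{H}}\Big|^2=\mathrm{Var}\Big(\tfrac{1}{q}\|DF\|^2_{\mathfrak{H}}\Big)\leq \frac{q-1}{3q}\big(E|F|^4-3\big),
\end{equation*}
which is exactly the quantity under the square root in~(\ref{hjo6}). This inequality is classical in the Malliavin-Stein literature (it underlies the fourth moment theorem), so I would simply cite it from \cite{NourdinPe2009} or \cite{nualartm2} rather than reprove it.

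The substantive work is bounding the tail $P(|F|>|z|/2)$ by something of the form $c_q^2\,e^{-z^{2/q}/2^{2+2/q}}$, so that after taking the square root it produces the stated exponential term $c_q e^{-z^{2/q}/2^{2+2/q}}$. The plan here is to invoke a hypercontractivity / concentration estimate for elements of a fixed Wiener chaos: random variables in $\mathcal{H}_q$ satisfy a sub-exponential tail bound of the form $P(|I_q(f)|>t)\leq C\exp(-c\,t^{2/q})$ for $t>0$, with constants depending only on $q$ and the variance (here normalized to one). Applying this with $t=|z|/2$ gives $P(|F|>|z|/2)\leq C_q\exp(-c_q'(|z|/2)^{2/q})=C_q\exp(-c_q'|z|^{2/q}/2^{2/q})$; matching the exponent $2^{2+2/q}$ in the denominator fixes the precise constant and absorbs the remaining factors into $c_q$.

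The main obstacle I anticipate is pinning down the exact constant in the exponent so that it reads $z^{2/q}/2^{2+2/q}$ rather than merely $c_q z^{2/q}$. This requires using a sharp form of the chaos tail bound with explicit dependence on the variance; the cleanest route is the inequality $P(|I_q(f)|\geq t)\leq \exp\!\big(-\tfrac{q}{2e}(t^2/\sigma^2)^{1/q}\big)$ for $t\geq \sigma^{1/q}$ type estimates that follow from hypercontractivity of the Ornstein-Uhlenbeck semigroup, then carefully tracking how the factor $(1/2)^{2/q}$ from $t=|z|/2$ combines with the remaining universal constant to yield $2^{2+2/q}$ in the denominator while dumping everything else into $c_q$. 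Once that constant-chasing is done, substituting both bounds into~(\ref{hjo}) and observing that $e^{-z^2/4}$ is already in the desired form completes the proof for all $z\in\mathbb{R}$, since~(\ref{hjo}) is stated with $|z|$ and is symmetric.
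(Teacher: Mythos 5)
Your overall strategy coincides with the paper's: apply Theorem \ref{kfoi} with $E[F]=0$, bound the variance-type factor by the fourth-moment inequality $E\big|1-\tfrac{1}{q}\|DF\|^2_{\mathfrak{H}}\big|^2\le \tfrac{q-1}{3q}(E|F|^4-3)$ cited from \cite{NourdinPe2009}, and control $P(|F|>|z|/2)$ by a tail estimate for a fixed Wiener chaos. The paper does exactly this, but the tail estimate it quotes is Theorem 4.1 of \cite{Major2005}: $P(|I_q(f)|>x)\le c_q^2\exp(-x^{2/q}/2)$ for all $x\ge 0$ when the variance is one. Plugging in $x=|z|/2$ and taking the square root yields exactly $c_q\exp(-z^{2/q}/2^{2+2/q})$, with no constant-chasing at all.

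The gap in your proposal is precisely the tail bound you intend to use. The hypercontractivity route gives $P(|I_q(f)|\ge t)\le \exp\big(-\tfrac{q}{2e}\,t^{2/q}\big)$ (variance one, $t$ sufficiently large), and the constant $\tfrac{q}{2e}$ in the exponent is a genuine loss that cannot be ``dumped into $c_q$'': a deficit in the exponential decay rate is not compensable by any multiplicative constant as $|z|\to\infty$. Concretely, after substituting $t=|z|/2$ and taking square roots, your bound decays like $\exp\big(-\tfrac{q}{4e}\cdot\tfrac{|z|^{2/q}}{2^{2/q}}\big)$, whereas the theorem requires decay $\exp\big(-\tfrac{1}{4}\cdot\tfrac{|z|^{2/q}}{2^{2/q}}\big)$; these are compatible if and only if $\tfrac{q}{4e}\ge\tfrac14$, i.e. $q\ge e$. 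For $q=2$ hypercontractivity yields only $\exp(-|z|/(4e))$ against the required $\exp(-|z|/8)$, and since $4e>8$ the claimed inequality (\ref{hjo6}) does not follow for large $|z|$. So your argument fails at $q=2$, and the fix is not bookkeeping but a sharper ingredient: Major's tail estimate with the constant $\tfrac12$ in the exponent, which goes beyond plain hypercontractivity. (By contrast, the restriction of the hypercontractive bound to large $t$ is harmless, since for $|z|$ in a bounded range everything can indeed be absorbed into $c_q$.)
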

\begin{proof}When $F= I_q(f),$ it is known from \cite{NourdinPe2009} that
$$\sqrt{E|1-\langle DF,-DL^{-1}F\rangle_{\mathfrak{H}}|^2}\leq \sqrt{\frac{q-1}{3q}(E|F|^4-3)}.$$
On the other hand, for the multiple Wiener-It\^o integrals with variance one, Theorem 4.1 in \cite{Major2005} provides us the concentration bound
$$P\left(|I_q(f)|>x\right)\leq c_q^2\exp\left(-\frac{x^{2/q}}{2}\right),\,\,\,x\geq  0,$$
where $c_q$ is a positive constant depending only on $q.$ So the bound (\ref{hjo6}) follows directly from (\ref{hjo}).
\end{proof}
To our best knowledge, this is the first time that a non-uniform Berry-Esseen bound for the multiple Wiener-It\^o integrals have appeared in the literature. In addition, this non-uniform bound significantly improves the uniform bound obtained in \cite{NourdinPe2009}:
$$\sup\limits_{z\in \mathbb{R}}|P(F\leq z)-P(Z\leq z)|\leq \sqrt{\frac{q-1}{3q}(E|F|^4-3)}.$$

\subsection{Exponential functionals}
Let $B=(B_t)_{t\in [0, T]}$ be a standard Brownian motion defined on a complete probability space $(\Omega ,\mathcal{F},\mathbb{F},P)$, where $\mathbb{F}= (\mathcal{F}_t)_{t \in [0,T]}$ is a natural filtration generated by $B.$ We consider the exponential functional of the form
\begin{equation}\label{yu1}
F_t=\int_0^t e^{as+B_s}ds,\,\,t\in [0, T],
\end{equation}
where $a$ is a real number. It is known that this functional plays an important role in several domains.  A lot of fruitful properties of $F_t$ can be founded in the literature, see e.g. \cite{Matsumoto2005a,Matsumoto2005b,Yor2001}. Let us define
\begin{equation}\label{y6u1}
\tilde{F}_t:=\frac{F_t-m_t}{\sigma_t},\,\,t\in [0, T],
\end{equation}
where $m_t=E[F_t]$ and $\sigma_t^2={\rm Var}(F_t).$ It was proved in \cite{Dufresne2004} that $\lim\limits_{t\to 0^+}\frac{m_t}{t}=1,\,\,\,\lim\limits_{t\to 0^+}\frac{3\sigma_t^2}{t^3}=1$ and
$$\tilde{F}_t\,\,\,\text{converges in distribution to}\,\,Z\sim N(0,1)\,\,\text{as}\,\,t\to 0.$$
In the next theorem, we give an explicit estimate for the rate of convergence.
\begin{thm}\label{vnms}Let $Z\sim N(0,1)$ and let $(\tilde{F}_t)_{t\geq 0}$ be given by (\ref{y6u1}). Then, for each $t\in (0,T],$ we have
$$|P(\tilde{F}_t\leq z)-P(Z\leq z)|\leq \frac{2e^{2at+4t}t^3\sqrt{t}}{\sigma_t^2} \left(\exp\left(-\frac{\ln^2(1+\frac{|z|\sigma_t}{2m_t})}{4t}\right)+e^{-z^2/16}+2e^{-z^2/4}\right)\,\,\,\forall\,z\in \mathbb{R}.$$
\end{thm}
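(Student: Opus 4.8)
The plan is to apply Theorem \ref{kfoi} to $F=\tilde F_t$, which has mean zero and, being a nondegenerate smooth functional of $B$, an absolutely continuous law, so that
\[
|P(\tilde F_t\le z)-P(Z\le z)|\le \sqrt{E|1-\langle D\tilde F_t,-DL^{-1}\tilde F_t\rangle_{\mathfrak H}|^2}\,\big(\sqrt{P(|\tilde F_t|>|z|/2)}+2e^{-z^2/4}\big).
\]
The first factor should produce the prefactor $\frac{2e^{2at+4t}t^3\sqrt t}{\sigma_t^2}$, the upper-tail part of $\sqrt{P(|\tilde F_t|>|z|/2)}$ the term $\exp(-\ln^2(1+\frac{|z|\sigma_t}{2m_t})/4t)$, the lower-tail part the term $e^{-z^2/16}$, and $2e^{-z^2/4}$ is inherited directly. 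Here $\mathfrak H=L^2([0,T])$ and $B_s=W(\ind_{[0,s]})$, so a direct computation gives $D_rF_t=\int_r^t e^{av+B_v}\,dv$ for $r\le t$.

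For the first factor I would avoid $L^{-1}$ and instead use the Skorokhod variant (\ref{h5jo}) from the Remark. By the Clark--Ocone formula $\tilde F_t=\delta(u/\sigma_t)$ with the explicit adapted integrand $u_r=E[D_rF_t\mid\mathcal F_r]=e^{B_r-r/2}\int_r^t e^{(a+1/2)w}\,dw$. Since $E\langle DF_t,u\rangle_{\mathfrak H}=E[(F_t-m_t)\delta(u)]=\sigma_t^2$, the first factor equals $\frac1{\sigma_t^2}\sqrt{\mathrm{Var}\big(\langle DF_t,u\rangle_{\mathfrak H}\big)}$, so it suffices to prove $\mathrm{Var}(\langle DF_t,u\rangle_{\mathfrak H})\le 4e^{4at+8t}t^7$. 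I expect this variance estimate to be the main obstacle: one must show the fluctuation of $\langle DF_t,u\rangle_{\mathfrak H}$ is of order $t^{7/2}$, genuinely smaller than its mean $\sigma_t^2=O(t^3)$, so bounding the variance by the second moment (which is $O(t^6)$) is too lossy. The clean route is the Gaussian Poincar\'e inequality $\mathrm{Var}(G)\le E\|DG\|_{\mathfrak H}^2$ applied to $G=\langle DF_t,u\rangle_{\mathfrak H}$, after which $D_\theta G=\langle D_\theta DF_t,u\rangle_{\mathfrak H}+\langle DF_t,D_\theta u\rangle_{\mathfrak H}$ is estimated term by term; each factor $e^{av+B_v}$ is controlled through $E[e^{p(av+B_v)}]=e^{pav+p^2v/2}$, while the successive $dr$, $d\theta$ and time integrations supply the extra powers of $t$ and the exponential moments produce the $e^{4at+8t}$.

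For the tail factor I would split $P(|\tilde F_t|>|z|/2)\le P(F_t>m_t+|z|\sigma_t/2)+P(F_t<m_t-|z|\sigma_t/2)$ and use $\sqrt{a+b}\le\sqrt a+\sqrt b$. Writing $\mu_t=\int_0^t e^{as}\,ds$ and $M_t=\max_{0\le s\le t}B_s$, the upper tail is handled by the pathwise bound $F_t\le e^{M_t}\mu_t$: since $m_t\ge\mu_t$, the event forces $M_t>\ln(1+\frac{|z|\sigma_t}{2m_t})$, and the reflection-principle estimate $P(M_t>y)=2(1-\Phi(y/\sqrt t))\le e^{-y^2/2t}$ (the same computation as in Lemma \ref{erf}) gives $P(F_t>m_t+|z|\sigma_t/2)\le\exp(-\ln^2(1+\frac{|z|\sigma_t}{2m_t})/2t)$; taking the square root yields the logarithmic term. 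For the lower tail I would linearize with $e^x\ge 1+x$ to obtain $F_t\ge \mu_t+G_t$ with $G_t=\int_0^t e^{as}B_s\,ds$ Gaussian, reducing the event to a Gaussian tail $P(G_t<(m_t-\mu_t)-|z|\sigma_t/2)$; estimating $\mathrm{Var}(G_t)$ against $\sigma_t^2$ and using $1-\Phi(u)\le\frac12 e^{-u^2/2}$ produces $P(F_t<m_t-|z|\sigma_t/2)\le e^{-z^2/8}$, whose square root is the $e^{-z^2/16}$ term. Collecting the three contributions together with the constant $\frac{2e^{2at+4t}t^3\sqrt t}{\sigma_t^2}$ completes the proof; besides the variance estimate, the only other delicate point is matching the precise numerical constant in the lower-tail step, which requires controlling both $\mathrm{Var}(G_t)$ and the shift $m_t-\mu_t$.
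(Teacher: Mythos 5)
Your handling of the prefactor coincides with the paper's own proof: Clark--Ocone representation, the Skorokhod-variant bound (\ref{h5jo}), and a Poincar\'e-type variance estimate for $\Gamma_t=\frac{1}{\sigma_t^2}\langle DF_t,u\rangle_{\mathfrak H}$. The paper implements exactly this via its covariance formula (\ref{uijr}), writing $E|1-\Gamma_t|^2={\rm Var}(\Gamma_t)\le\int_0^tE|D_r\Gamma_t|^2dr$ and then using Cauchy--Schwarz with fourth-moment bounds on $D\tilde F_t$ and $D^2\tilde F_t$; your target ${\rm Var}(\langle DF_t,u\rangle_{\mathfrak H})\le 4e^{4at+8t}t^7$ is precisely the paper's estimate (\ref{9ol}) integrated over $r$. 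Your upper-tail estimate takes a route different from the paper's (which sets $X_t=\ln F_t$, observes $0<D_\theta X_t\le 1$, and applies the Gaussian concentration result of \cite{DPT2011} together with $E[X_t]\le\ln m_t$), but your pathwise bound $F_t\le e^{M_t}\mu_t$ with $\mu_t\le m_t$ and the reflection principle is correct and reproduces the paper's inequality (\ref{dk1}) by more elementary means.

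The genuine gap is in the lower tail, and it is not a matter of bookkeeping. Writing $\delta_t:=m_t-\mu_t>0$ and $v_t^2:={\rm Var}(G_t)$, your linearization gives $P(\tilde F_t\le-x)\le P\big(G_t\le\delta_t-x\sigma_t\big)=\Phi\big(\tfrac{\delta_t-x\sigma_t}{v_t}\big)$. The shift $\delta_t$ enlarges this event, so the information ${\rm Var}(G_t)\le\sigma_t^2$ is useless by itself: for $x\sigma_t\ge\delta_t$ it only yields $\tfrac12\exp\big(-\tfrac{(x-\delta_t/\sigma_t)^2}{2}\big)$, which for large $x$ behaves like $e^{-x^2/2+x\delta_t/\sigma_t}$ and therefore exceeds the required $e^{-x^2/2}$ --- and you need this exact constant, since the term $e^{-z^2/16}$ in the statement is the square root of $e^{-z^2/8}$, i.e.\ of $e^{-x^2/2}$ at $x=|z|/2$. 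To repair the step you must show that the variance deficit quantitatively dominates the shift, roughly $\delta_t^2\le 2\ln 2\,(\sigma_t^2-v_t^2)$ together with control of the regime $x\sigma_t\le\delta_t$, uniformly over $t\in(0,T]$ and all $a\in\mathbb R$. This is delicate: as $t\to 0^+$ one has $\delta_t\sim t^2/4$ while $\sigma_t^2-v_t^2=O(t^4)$, so both sides of the needed inequality are of the same order $t^4$ and the constants genuinely matter. You defer exactly this point as ``matching the precise numerical constant,'' but it is the crux of the step, not a detail. The paper avoids the problem entirely by a sign argument rather than a Gaussian comparison: for $g(\lambda)=E[e^{-\lambda\tilde F_t}]$, two applications of (\ref{uijr}) give $g'(\lambda)=\lambda g(\lambda)-\lambda^2E\big[e^{-\lambda\tilde F_t}\int_0^tD_r\tilde F_tE[D_r\Gamma_t|\mathcal F_r]dr\big]$, and since $D\tilde F_t\ge0$ and $D^2\tilde F_t\ge0$ the subtracted term is nonnegative; hence $g'\le\lambda g$, so $g(\lambda)\le e^{\lambda^2/2}$ and $P(\tilde F_t\le-x)\le e^{-x^2/2}$ with no shift and no variance comparison (Proposition \ref{swde}, inequality (\ref{dk2})). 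Substituting this argument for your Gaussian-comparison step would complete your proof.
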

In order to see the rate of convergence, we note that
$$\text{$\lim\limits_{t\to 0^+}\frac{2e^{2at+4t}t^3}{\sigma_t^2}=6$ and $\lim\limits_{t\to 0^+}\frac{\ln^2(1+\frac{|z|\sigma_t}{2m_t})}{4t}=\frac{z^2}{48}$}.$$
Hereafter, the Malliavin derivative operator $D$ is with respect to $B.$ In proofs, we use the following covariance formula: For any $F,G\in \mathbb{D}^{1,2},$ we have
\begin{equation}\label{uijr}
{\rm Cov}(F,G)=E\left[\int_0^T D_sFE[D_sG|\mathcal{F}_s]ds\right].
\end{equation}
Before proving Theorem \ref{vnms}, we first provide the concentration inequalities for $\tilde{F}_t.$
\begin{prop}\label{swde} Let $(\tilde{F}_t)_{t\geq 0}$ be given by (\ref{y6u1}). Then, for each $t\in (0,T],$ we have
 \begin{equation}\label{dk1}
P(\tilde{F}_t\geq x)\leq e^{-\frac{\ln^2(1+x\sigma_t/m_t)}{2t}},\,\,\,x\geq 0
\end{equation}
and
\begin{equation}\label{dk2}
P(\tilde{F}_t\leq -x)\leq e^{-x^2/2},\,\,\,x\geq 0
\end{equation}
\end{prop}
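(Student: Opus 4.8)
The plan is to exploit the explicit Malliavin derivative of $F_t$ together with its predictable (Clark--Ocone) representation, treating the two tails by genuinely different mechanisms: the upper tail of $F_t$ is log-normal-like, while the lower tail is sub-Gaussian. First I would compute, for $r\in[0,t]$,
\[
D_rF_t=\int_r^t e^{as+B_s}\,ds,
\]
so that $0\le D_rF_t\le F_t$, and record the representation $F_t-m_t=\int_0^t u_s\,dB_s$ with $u_s=E[D_sF_t\,|\,\mathcal F_s]=e^{B_s-s/2}\int_s^t e^{(a+1/2)r}\,dr\ge0$. Taking $G=F_t$ in (\ref{uijr}) and using that $u_s$ is $\mathcal F_s$-measurable gives $\sigma_t^2=E\big[\int_0^t u_s\,D_sF_t\,ds\big]=E\big[\int_0^t u_s^2\,ds\big]$.

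For the upper bound (\ref{dk1}) I would pass to the logarithm. Since $D_r\ln F_t=D_rF_t/F_t\in[0,1]$, we get $\|D\ln F_t\|_{\mathfrak H}^2=\int_0^t (D_rF_t/F_t)^2\,dr\le t$ almost surely, so $\ln F_t$ is a Lipschitz Wiener functional and the classical Gaussian concentration inequality (for functionals with a.s.\ bounded Malliavin derivative) yields $P(\ln F_t-E[\ln F_t]\ge u)\le e^{-u^2/(2t)}$ for $u\ge0$. Jensen's inequality gives $E[\ln F_t]\le\ln E[F_t]=\ln m_t$, hence for $x\ge0$
\[
P(\tilde F_t\ge x)=P\big(\ln F_t\ge\ln(m_t+x\sigma_t)\big)\le \exp\!\Big(-\tfrac{1}{2t}\big(\ln(m_t+x\sigma_t)-E[\ln F_t]\big)^2\Big)\le \exp\!\Big(-\tfrac{\ln^2(1+x\sigma_t/m_t)}{2t}\Big),
\]
which is exactly (\ref{dk1}).

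For the lower bound (\ref{dk2}) the log-transform is useless (Jensen points the wrong way), so I would instead prove the sub-Gaussian estimate $E[e^{-\lambda M_t}]\le e^{\lambda^2\sigma_t^2/2}$ for all $\lambda\ge0$, where $M_t:=F_t-m_t$; a Chernoff bound optimized at $\lambda=x/\sigma_t$ then gives exactly $P(\tilde F_t\le -x)=P(-M_t\ge x\sigma_t)\le e^{-x^2/2}$. Writing $\psi(\lambda)=E[e^{-\lambda M_t}]$, the covariance formula (\ref{uijr}) applied to $F_t$ and $e^{-\lambda M_t}$ (whose derivative is $-\lambda e^{-\lambda M_t}D_rF_t$) gives $-\psi'(\lambda)=-E[M_te^{-\lambda M_t}]=\lambda E\big[e^{-\lambda M_t}\int_0^t u_s D_sF_t\,ds\big]$. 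The crux is the correlation inequality
\[
E\Big[e^{-\lambda M_t}\int_0^t u_s D_sF_t\,ds\Big]\le \sigma_t^2\,E[e^{-\lambda M_t}],
\]
i.e.\ $\mathrm{Cov}\big(e^{-\lambda M_t},\int_0^t u_sD_sF_t\,ds\big)\le0$. Applying (\ref{uijr}) once more, $D_r(e^{-\lambda M_t})=-\lambda e^{-\lambda M_t}D_rF_t\le0$ while $G:=\int_0^t u_sD_sF_t\,ds$ has $D_rG\ge0$ (both $u_s$ and $D_sF_t$ have nonnegative Malliavin derivatives), so the integrand in the covariance formula is nonpositive. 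This yields $(\ln\psi)'(\lambda)\le\lambda\sigma_t^2$, and integrating from $0$ with $\psi(0)=1$ gives $\psi(\lambda)\le e^{\lambda^2\sigma_t^2/2}$.

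The main obstacle is the lower tail, and specifically the rigorous justification of this correlation step: I must check that $G\in\mathbb D^{1,2}$ with the claimed sign $D_rG\ge0$ (which reduces to $D_ru_s\ge0$ and $D_rD_sF_t\ge0$, both clear from the explicit formulas), that the stochastic integral defining $M_t$ is a true martingale so the integration-by-parts identity for $-\psi'(\lambda)$ is valid, and that $\psi$ is finite and differentiable under the expectation for $\lambda\ge0$; here the one-sided bound $M_t>-m_t$ is comforting, since it forces $e^{-\lambda M_t}\le e^{\lambda m_t}$ and hence all these negative exponential moments to be finite. Everything else is bookkeeping, and the upper bound is routine once the Lipschitz property of $\ln F_t$ is in hand.
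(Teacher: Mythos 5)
Your proposal is correct and follows essentially the same route as the paper's proof: for the upper tail, the Lipschitz property of $\ln F_t$ plus Jensen's inequality (the paper invokes Theorem 2.4 of \cite{DPT2011}, which rests on the bound $0\le D_\theta \ln F_t\le 1$ just as your classical Gaussian concentration inequality does), and for the lower tail, the same Herbst-type differential inequality for the Laplace transform, applying the covariance formula (\ref{uijr}) twice and using the nonnegativity of the first and second Malliavin derivatives to control the sign of the correlation term (the paper merely works with the normalized $\tilde F_t$ and $\Gamma_t$ in place of your $F_t$ and $\int_0^t u_sD_sF_t\,ds$). One harmless sign slip: your displayed identity should read $\psi'(\lambda)=-E[M_te^{-\lambda M_t}]=\lambda E\big[e^{-\lambda M_t}\int_0^t u_sD_sF_t\,ds\big]$, which is what your subsequent bound $(\ln\psi)'(\lambda)\le\lambda\sigma_t^2$ actually uses.
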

\begin{proof}Fixed $t\in (0,T],$ we consider the random variable
$$X_t=\ln\left(\int_0^t e^{as+B_s}ds\right).$$
It is easy to verify that $X_t$ is a Malliavin differentiable random variable and its derivative  is given by
$$D_\theta X_t=\frac{\int_\theta^t e^{as+B_s}ds}{\int_0^t e^{as+B_s}ds},\,\,0\leq\theta\leq t.$$
We have $0< D_\theta X_t\leq 1$ and hence,
$$0<\int_0^t D_\theta X_tE[D_\theta X_t|\mathcal{F}_\theta]d\theta\leq t\,\,a.s.$$
This, together with Theorem 2.4 in \cite{DPT2011}, implies that
$$P\left(X_t-E[X_t]\geq y\right)\leq  e^{-\frac{y^2}{2t}}\,\,\forall\,y\geq 0.$$
We now observe that $E[X_t]\leq \ln m_t$ by Lyapunov's inequality. Then, for $x\geq 0,$ we deduce
\begin{align*}
P(\tilde{F}_t\geq x)&=P\left(X_t\geq \ln(x\sigma_t+m_t)\right)\\
&=P\left(X_t-E[X_t]\geq \ln(x\sigma_t+m_t)-E[X_t]\right)\\
&\leq P\left(X_t-E[X_t]\geq \ln(x\sigma_t+m_t)-\ln m_t\right)\\
&\leq e^{-\frac{\ln^2(1+x\sigma_t/m_t)}{2t}}.
\end{align*}
This completes the proof of (\ref{dk1}). In order to prove the relation (\ref{dk2}), we consider the function
$$g(\lambda)=E[e^{-\lambda \tilde{F}_t}],\,\,\lambda\geq 0.$$
By using the covariance formula (\ref{uijr}), we have
$$g'(\lambda)=-E[e^{-\lambda \tilde{F}_t}\tilde{F}_t]=\lambda E[e^{-\lambda \tilde{F}_t}\Gamma_t],$$
where $\Gamma_t$ is defined by
\begin{equation}\label{hhy}
\Gamma_t:=\int_0^t D_\theta \tilde{F}_tE[D_\theta \tilde{F}_t|\mathcal{F}_\theta]d\theta.
\end{equation}
Note that $E[\Gamma_t]={\rm Cov}(\tilde{F}_t,\tilde{F}_t)=1.$ Then, once again, we use (\ref{uijr}) to get
\begin{align*}
g'(\lambda)&=\lambda E[e^{-\lambda \tilde{F}_t}]+\lambda{\rm Cov}(e^{-\lambda \tilde{F}_t},\Gamma_t)\\
&=\lambda g(\lambda)-\lambda^2E\left[e^{-\lambda \tilde{F}_t}\int_0^t D_r\tilde{F}_tE[D_r\Gamma_t|\mathcal{F}_r]dr\right].
\end{align*}
We observe that $\int_0^t D_r\tilde{F}_tE[D_r\Gamma_t|\mathcal{F}_r]dr\geq 0\,\,a.s.$ Indeed, we have for $0\leq r,\theta\leq t,$
$$D_\theta \tilde{F}_t=\frac{1}{\sigma_t}\int_\theta^t e^{as+B_s}ds\geq 0,\,\,\,D_rD_\theta \tilde{F}_t=\frac{1}{\sigma_t}\int_\theta^t e^{as+B_s}\ind_{[r,t]}(s)ds\geq 0$$
and
$$D_r\Gamma_t=\int_0^t D_rD_\theta \tilde{F}_tE[D_\theta \tilde{F}_t|\mathcal{F}_\theta]d\theta+\int_r^t D_\theta \tilde{F}_tE[D_rD_\theta \tilde{F}_t|\mathcal{F}_\theta]d\theta\geq 0.$$
So it holds that
$$g'(\lambda)\leq \lambda g(\lambda),\,\,\lambda\geq 0.$$
This gives us
$$g(\lambda)\leq e^{\frac{\lambda^2}{2}},\,\,\lambda\geq 0.$$
Fixed $x\geq 0,$ by Markov’s inequality, we have
\begin{align*}
P(\tilde{F}_t\leq -x)\leq e^{-\lambda x}E[e^{-\lambda \tilde{F}_t}]=e^{-\lambda x}g(\lambda)\leq e^{-\lambda x+\frac{\lambda^2}{2}},\,\,\lambda\geq 0.
\end{align*}
Choosing $\lambda=x$ yields $P(\tilde{F}_t\leq -x)\leq e^{-x^2/2}.$ So the relation (\ref{dk2}) is verified.

The proof of the proposition is complete.
\end{proof}
\noindent{\it Proof of Theorem \ref{vnms}.}  We will carry out the proof in two steps.

\noindent{\it Step 1: Moment estimates.} By using the H\"older inequality we have
\begin{align*}
E|D_\theta \tilde{F}_t|^4&=\frac{1}{\sigma_t^4}E\bigg|\int_\theta^t e^{as+B_s}ds\bigg|^4\leq \frac{t^3}{\sigma_t^4}\int_\theta^t E[e^{4as+4B_s}]ds\\
&= \frac{t^3}{\sigma_t^4}\int_\theta^t e^{4as+8s}ds= \frac{t^4}{\sigma_t^4} e^{4at+8t}\,\,\forall\,0\leq \theta\leq t
\end{align*}
and
\begin{align*}
E|D_rD_\theta \tilde{F}_t|^4=\frac{1}{\sigma_t^4}E\bigg|\int_\theta^t e^{as+B_s}\ind_{[r,t]}(s)ds\bigg|^4\leq \frac{t^4}{\sigma_t^4} e^{4at+8t}\,\,\forall\,0\leq r,\theta\leq t.
\end{align*}
Let $\Gamma_t$ be given by (\ref{hhy}). We use the Cauchy-Schwarz and Lyapunov inequalities to deduce
\begin{align}
E|D_r\Gamma_t|^2&=E\bigg|\int_0^t D_rD_\theta \tilde{F}_tE[D_\theta \tilde{F}_t|\mathcal{F}_\theta]d\theta+\int_r^t D_\theta \tilde{F}_tE[D_rD_\theta \tilde{F}_t|\mathcal{F}_\theta]d\theta\bigg|^2\notag\\
&\leq 2t\int_0^t E|D_rD_\theta \tilde{F}_tE[D_\theta \tilde{F}_t|\mathcal{F}_\theta]|^2d\theta+2t\int_r^t E|D_\theta \tilde{F}_tE[D_rD_\theta \tilde{F}_t|\mathcal{F}_\theta]|^2d\theta\notag\\
&\leq 2t\int_0^t \sqrt{E|D_rD_\theta \tilde{F}_t|^4E|D_\theta \tilde{F}_t|^4}d\theta+2t\int_r^t \sqrt{E|D_\theta \tilde{F}_t|^4E|D_rD_\theta \tilde{F}_t|^4}d\theta\notag\\
&\leq \frac{4t^6}{\sigma_t^4} e^{4at+8t}\,\,\forall\,0\leq r\leq t.\label{9ol}
\end{align}
\noindent{\it Step 2: Conclusion.} By the Clark-Ocone formula, we have
$$\tilde{F}_t=E[\tilde{F}_t]
+\int_0^tE[\tilde{F}_t|\mathcal{F}_\theta]dB_\theta.$$
Hence, by using the bound (\ref{h5jo}) with $u_\theta=E[\tilde{F}_t|\mathcal{F}_\theta],$ we obtain
\begin{equation}
|P(\tilde{F}_t\leq z)-P(Z\leq z)|\leq \sqrt{E|1-\Gamma_t|^2}\left(\sqrt{P(|\tilde{F}_t|>|z|/2)}+2e^{-z^2/4}\right)\,\,\,\forall\,z\in \mathbb{R},
\end{equation}
Thanks to Proposition \ref{swde} we have
\begin{align*}
P(|\tilde{F}_t|>|z|/2)&=P(\tilde{F}_t>|z|/2)+P(\tilde{F}_t<-|z|/2)\\
&\leq \exp\left(-\frac{\ln^2(1+\frac{|z|\sigma_t}{2m_t})}{2t}\right)+e^{-z^2/8}\,\,\,\forall\,z\in \mathbb{R}.
\end{align*}
On the other hand, recalling (\ref{9ol}), we have
\begin{align*}
E|1-\Gamma_t|^2={\rm Cov}(\Gamma_t,\Gamma_t)\leq \int_0^t E|D_r\Gamma_t|^2dr\leq \frac{4t^7}{\sigma_t^4} e^{4at+8t}.
\end{align*}
Combining the above computations yields
$$|P(\tilde{F}_t\leq z)-P(Z\leq z)|\leq \frac{2e^{2at+4t}t^3\sqrt{t}}{\sigma_t^2} \left(\exp\left(-\frac{\ln^2(1+\frac{|z|\sigma_t}{2m_t})}{4t}\right)+e^{-z^2/16}+2e^{-z^2/4}\right)\,\,\,\forall\,z\in \mathbb{R}.$$
This finishes the proof of Theorem \ref{vnms}. \hfill $\square$


\noindent {\bf Declaration of interests:} The authors do not work for, advise, own shares in, or receive funds from any organisation that could benefit from this article, and have declared no affiliation other than their research organisations


\end{document}